\documentclass[12pt,reqno]{amsart} 

\usepackage{amssymb,bbm,enumitem,marginnote,url}
\usepackage{aliascnt}
\usepackage{doi}
\usepackage{pgfplots} 

\usepackage[margin=1.25in]{geometry}

\usepackage{psfrag}

\newcommand{\arxiv}[1]{%
 \href{https://arxiv.org/pdf/#1.pdf}{ArXiv:#1}}

\usepackage{color}

\newcommand\blue[1]{\textcolor{blue}{#1}}



\newtheorem{theorem}{Theorem}

\newaliascnt{lemma}{theorem}
\newtheorem{lemma}[lemma]{Lemma}
\aliascntresetthe{lemma}

\newaliascnt{proposition}{theorem}
\newtheorem{proposition}[proposition]{Proposition}
\aliascntresetthe{proposition}

\newaliascnt{corollary}{theorem}

\aliascntresetthe{corollary}

\newaliascnt{conjecture}{theorem}

\aliascntresetthe{conjecture}


\newaliascnt{openQ}{theorem}

\aliascntresetthe{openQ}

\newaliascnt{quest}{theorem}

\aliascntresetthe{quest}

\newaliascnt{questx}{conjx}

\aliascntresetthe{questx}

\theoremstyle{definition}

\newaliascnt{defn}{theorem}

\aliascntresetthe{defn}

\newaliascnt{example}{theorem}

\aliascntresetthe{example}

\newaliascnt{rem}{theorem}
\newtheorem{rem}[rem]{Remark}
\aliascntresetthe{rem}

\makeatletter
\def\tagform@#1{\maketag@@@{\ignorespaces#1\unskip\@@italiccorr}}
\let\orgtheequation\theequation
\def\theequation{(\orgtheequation)}
\makeatother
\def\equationautorefname~{}

%
%
%
%

\newcommand{\Vol}{\operatorname{Vol}}
\renewcommand{\deg}{\operatorname{deg}}

\newcommand{\B}{{\mathbb B}}

\newcommand{\R}{{\mathbb R}}

\let\oldmarginnote\marginnote
\renewcommand{\marginnote}[1]{\oldmarginnote{\tiny \blue{#1}}}

\begin{document}
\title[Second Laplacian eigenvalue on sphere]{Maximization of the second Laplacian eigenvalue on the sphere}

\keywords{Spectral theory, shape optimization}
\subjclass[2020]{\text{Primary 35P15. Secondary 58C40, 58J50}}

	\begin{abstract}
We prove a sharp isoperimetric inequality for the second nonzero eigenvalue of the Laplacian on $S^m$. For $S^{2}$, the second nonzero eigenvalue becomes maximal as the surface degenerates to two disjoint spheres, by a result of Nadirashvili for which Petrides later gave another proof. For higher dimensional spheres, the analogous upper bound was conjectured by Girouard, Nadirashvili and Polterovich. Our method to confirm the conjecture builds on Petrides' work and recent developments on the hyperbolic center of mass and provides also a simpler proof for $S^2$. 
	\end{abstract}
	
\author[]{Hanna N. Kim}
\address{Department of Mathematics, University of Illinois, Urbana--Champaign, IL 61801, U.S.A.}
\email{nekim2@illinois.edu}

	\maketitle 
	

\section{\bf Introduction and results}
Let $g$ be a Riemannian metric on $S^{m}$, $m \geq 2$. The Laplacian operator $\Delta_{g}$ on the manifold $(S^{m},g)$  has a discrete sequence of eigenvalues
\begin{align*}
0=\lambda_{0}(S^m, g) \leq \lambda_{1}(S^m, g) \leq \lambda_{2}(S^m, g) \cdots \leq \lambda_{k}(S^m, g) \leq \cdots \to \infty.
\end{align*}
We normalize the eigenvalues as $\lambda_{k}(S^m, g) \Vol(S^m,g)^{2/m}$ where $\Vol(S^m,g)$ denotes the volume of $S^m$ with respect to the metric $g$ and investigate isoperimetric problems concerning these normalized eigenvalue quantities.

\subsection{\bf 2-dimensional sphere}  
From the work of Hersch \cite{H70}, it is known that the first nonzero eigenvalue on $S^{2}$ has a sharp upper bound:
\begin{equation*}
\lambda_{1}(S^2,g) \Vol(S^2,g)  \leq 8 \pi,
\end{equation*}
with equality if $g$ is conformally equivalent to the standard ``round" metric.

The natural question to ask next is which metric induces the maximum of the second nonzero eigenvalue.

\begin{theorem}[Nadirashvili \protect{\cite{N02}}, Petrides \cite{P14}]\label{thm2dim}
 If $g$ is a Riemannian metric on $S^2$, then
\begin{equation}
\label{secondeigen}
\lambda_{2}(S^2,g) \Vol(S^2,g)  < 16 \pi.
\end{equation}
\end{theorem}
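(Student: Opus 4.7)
The plan is to argue by contradiction. Suppose there is a Riemannian metric $g$ on $S^{2}$ with $\lambda_{2}(S^{2},g)\Vol(S^{2},g) \geq 16\pi$; after rescaling, assume $\Vol(S^{2},g)=1$ and $\lambda_{2}(S^{2},g)\geq 16\pi$. By the min-max characterization of $\lambda_2$, a contradiction follows as soon as one exhibits two functions $\phi_A,\phi_B\in H^{1}(S^{2},g)$ with disjoint supports, each of zero $g$-mean, whose Rayleigh quotients $R_g(\phi):=\int|\nabla\phi|_g^{2}\,dv_g/\int\phi^{2}\,dv_g$ are both \emph{strictly} less than $16\pi$: then $\mathrm{span}(\phi_A,\phi_B)$ is a two-dimensional subspace of mean-zero functions on which every nonzero element has Rayleigh quotient at most $\max\{R_g(\phi_A),R_g(\phi_B)\}$, contradicting $\lambda_{2}\geq 16\pi$.

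Next, I would construct $\phi_A,\phi_B$ via Hersch's two-cap strategy. Partition $S^{2}=A\sqcup B$ by a circle of the underlying round conformal class, arranging $\Vol_g(A)=\Vol_g(B)=1/2$; for each cap $D\in\{A,B\}$, apply Hersch's centre-of-mass lemma to the probability measure $2\,dv_g|_D$ to obtain a Möbius self-map $\Psi_D$ of $S^{2}$ with $\int_D x_i\circ\Psi_D\,dv_g=0$ for $i=1,2,3$. Setting $\phi_D^{(i)}:=(x_i\circ\Psi_D)\mathbf{1}_D$, the two-dimensional conformal invariance of the Dirichlet integral together with $\sum_{i=1}^{3}|\nabla_{g_0}x_i|^{2}\equiv 2$ on the round sphere yields
\begin{equation*}
\sum_{i=1}^{3}\int_D |\nabla(x_i\circ\Psi_D)|_g^{2}\,dv_g \;=\; 2\,\Vol_{g_0}\!\bigl(\Psi_D(D)\bigr) \;\leq\; 8\pi,
\end{equation*}
while $\sum_{i=1}^{3}\int_D (x_i\circ\Psi_D)^{2}\,dv_g = \Vol_g(D) = 1/2$. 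Averaging in $i$ produces an index $i_D$ with $R_g(\phi_D^{(i_D)})\leq 16\pi$.

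The main obstacle is twofold: (i) carrying out the cap partition and both Möbius centerings \emph{simultaneously and coherently}, and (ii) upgrading the non-strict inequalities to strict ones. For (i), both $\Psi_A$ and $\Psi_B$ depend on the chosen partition, so $(A,\Psi_A,\Psi_B)$ must be determined jointly by a fixed-point or degree argument; the three-parameter family of round circles matches the centering constraints up to natural stabilizers, and the hyperbolic centre-of-mass construction in $\HH^{3}$, whose ideal boundary is $S^{2}$, gives a canonical, Möbius-equivariant way to select $\Psi_D$ continuously from $dv_g|_D$, which I expect to be the key tool in closing the fixed-point loop. For (ii), at each fixed smooth $g$ one has $\Vol_{g_0}(\Psi_D(D))<4\pi$ strictly because $D\subsetneq S^{2}$ and $\Psi_D$ is a diffeomorphism; making this strict deficit uniform along a maximizing sequence of metrics requires a concentration-compactness analysis showing that equality at $16\pi$ can only be approached through degeneration of $dv_{g_n}$ onto two point masses at distinct points, a configuration unattainable by any smooth unit-volume metric. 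This final step is the part I expect to demand the most care.
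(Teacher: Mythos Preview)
Your two-cap test-space idea has a fatal gap: the functions $\phi_D^{(i)}=(x_i\circ\Psi_D)\mathbf{1}_D$ are not in $H^{1}(S^{2})$. Since $x_i\circ\Psi_D$ has no reason to vanish on the circle $\partial D$, extending by zero across that circle creates a jump, and the distributional gradient then carries a singular measure supported on $\partial D$; the Dirichlet integral you write down records only the smooth interior contribution and omits this infinite term. Thus $\phi_A,\phi_B$ are not admissible trial functions and the min-max bound for $\lambda_2$ cannot be invoked. A cutoff near $\partial D$ would restore $H^{1}$-membership but introduces an uncontrolled gradient cost that wrecks the clean inequality $\sum_i\int_D|\nabla(x_i\circ\Psi_D)|_g^{2}\le 8\pi$. (Incidentally, your ``obstacle (i)'' is not an obstacle: once the equal-volume circle is fixed, each Hersch centering $\Psi_D$ is determined separately from the restricted measure, with no circular dependence to close.)

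The paper sidesteps the discontinuity by working with a \emph{single} cap $H$ and folding $S^{2}$ onto it: the map $F_H$ is the identity on $H$ and the conformal reflection across $\partial H$ on the complement, so the components of $T_{-c_H}\circ F_H$ are continuous and genuinely in $H^{1}(S^{2})$. Orthogonality to constants comes from choosing $c_H$ by Hersch's lemma; orthogonality to the first eigenfunction is obtained not via disjoint supports but by choosing the cap $H$ itself, through Petrides's degree-theoretic lemma applied to the $(m{+}1)$-parameter family of caps. The factor of $2$ you sought from two half-volume pieces arises instead because the fold doubles the cap's Dirichlet energy: $\int_{S^{2}}|\nabla(T_{-c_H}\circ F_H)|^{2}=2\int_{H}|\nabla T_{-c_H}|^{2}<2\cdot 8\pi$, with strictness immediate since $T_{-c_H}(H)\subsetneq S^{2}$. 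No concentration-compactness or limiting analysis of degenerating metrics is needed.
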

The upper bound of this inequality \autoref{secondeigen} is attained by a sequence of metrics degenerating to a union of two identical round spheres. 

This paper includes a proof of \autoref{thm2dim} that relies on the trial functions constructed by Nadirashvili \cite{N02}, Girouard, Nadirashvili and Polterovich \cite{GNP09}, and Petrides \cite{P14}. The earlier authors essentially built a single trial function satisfying a certain “maximizing direction” property. Instead, in this paper, we construct three valid trial functions and average their Rayleigh quotients. The advantage of this averaging method becomes clear in dimension $m \geq 3$ (see \autoref{newthmhighdim} below) where we get a stronger result than previous authors. Further, our method needs only spherical caps larger than the hemisphere, whereas all caps were considered in earlier works. These techniques are based on the ideas of Freitas and Laugesen \cite{FL20} for domains in hyperbolic space.

\subsection{\bf Higher dimensional spheres} 
Let us denote by
$$\sigma_{m}=\frac{2 \pi^{(m+1)/2}}{\Gamma \left(\frac{m+1}{2} \right)}$$ 
 the volume of the $m$-dimensional unit sphere when $m \geq 2$. The higher dimensional analogue of Hersch's result was shown by El Soufi and Ilias \cite{EI86} who proved that for any metric $g$ conformally equivalent to the standard metric,
\[
 \lambda_{1}(S^{m},g) \Vol(S^{m},g)^{2/m} \leq m \sigma_{m}^{2/m},
\]
where the upper bound is achieved by the round metric. 

For the second eigenvalue, we prove the following upper bound with the right-side attained in the degenerate case of the union of two identical round spheres. This theorem extends \autoref{thm2dim} to all dimensions, confirming a conjecture by Girouard, Nadirashvili and Polterovich \cite[Conjecture 1.2.3]{GNP09}

\begin{theorem}
\label{newthmhighdim}
 If $g$ is a Riemannian metric conformally equivalent to the standard metric on $S^{m}$, $m \geq 2$, then
\begin{equation}
\label{newsecondeigenhigh}
 \lambda_{2}(S^{m},g) \Vol(S^{m},g)^{2/m} <\, m(2 \sigma_{m})^{2/m}.
\end{equation}
\end{theorem}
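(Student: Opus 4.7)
The plan is, for each conformal metric $g = e^{2\varphi}g_0$ on $S^m$, to partition $S^m$ into two spherical caps $C_+, C_-$ of equal $g$-volume, build $m+1$ trial functions on each cap via Hersch balancing, and bound $\lambda_2(g)$ via a trace ratio of Rayleigh-quotient matrices. Recall
\[
R_g(f) = \frac{\int_{S^m} e^{(m-2)\varphi}|\nabla_{g_0} f|^2\,dv_{g_0}}{\int_{S^m} f^2\,e^{m\varphi}\,dv_{g_0}},
\]
and by Courant--Fischer, $\lambda_2(g) \le \max_{f \in V} R_g(f)$ for any three-dimensional subspace $V \subset H^1(S^m)$.

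The first step is to select, by an intermediate value argument, a great hyperplane $\Pi \subset \R^{m+1}$ splitting $S^m$ into two closed hemispheres $C_\pm$ with $\Vol(C_\pm, g) = \tfrac12 \Vol(S^m, g)$. The second step applies the hyperbolic center-of-mass technique of Freitas--Laugesen \cite{FL20} separately to the restricted probability measures obtained from $\mathbf{1}_{C_\pm}e^{m\varphi}\,dv_{g_0}$ to produce conformal automorphisms $T_\pm$ of $S^m$ satisfying the cap-Hersch balancing
\[
\int_{C_\pm} T_\pm^\ast x_i\,e^{m\varphi}\,dv_{g_0} = 0, \qquad i = 1, \ldots, m+1.
\]
Set $u_i^\pm := (T_\pm^\ast x_i)\,\mathbf{1}_{C_\pm}$; these integrate to zero against $e^{m\varphi}dv_{g_0}$, and a standard smoothing of $\mathbf{1}_{C_\pm}$ makes them genuine $H^1(S^m)$ functions with negligible effect on the estimates.

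Introduce the $(m+1)\times(m+1)$ matrices $A^\pm_{ij} := \int_{C_\pm}\nabla u_i^\pm\cdot\nabla u_j^\pm\,e^{(m-2)\varphi}\,dv_{g_0}$ and $B^\pm_{ij} := \int_{C_\pm}u_i^\pm u_j^\pm\,e^{m\varphi}\,dv_{g_0}$. Using the conformal identities $\sum_i (T^\ast x_i)^2 = 1$ and $\sum_i|\nabla_{g_0}(T^\ast x_i)|^2 = m J^{2/m}$, with $J$ the conformal Jacobian of $T$, together with H\"older's inequality with conjugate exponents $m/2$ and $m/(m-2)$,
\[
\operatorname{tr}(A^\pm) = m\int_{C_\pm}J_\pm^{2/m}e^{(m-2)\varphi}\,dv_{g_0} \le m\sigma_m^{2/m}\,\Vol(C_\pm, g)^{(m-2)/m},\qquad \operatorname{tr}(B^\pm) = \Vol(C_\pm, g).
\]
Since $A^\pm - \tau B^\pm$ is symmetric with vanishing trace when $\tau = \operatorname{tr}(A^\pm)/\operatorname{tr}(B^\pm)$, some unit vector $v$ satisfies $v^\top A^\pm v \le \tau v^\top B^\pm v$, so the combination $w^\pm := \sum_i v_i u_i^\pm$ satisfies $R_g(w^\pm) \le m(2\sigma_m)^{2/m}/\Vol(S^m,g)^{2/m}$. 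Since $w^+$ and $w^-$ have disjoint supports and vanishing mean, on the three-dimensional subspace $V := \operatorname{span}(1, w^+, w^-)$ a direct computation yields
\[
\max_{f\in V}R_g(f) = \max\bigl(R_g(w^+), R_g(w^-)\bigr) \le \frac{m(2\sigma_m)^{2/m}}{\Vol(S^m,g)^{2/m}},
\]
and Courant--Fischer delivers the claimed bound. Strict inequality follows because equality in H\"older would force $e^{m\varphi}$ proportional to $J_\pm$ on each cap, incompatible with smooth $g$ except in the two-spheres degenerate limit.

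The main obstacle is Step 2: adapting Hersch's lemma to the restricted measure on each cap, especially in the concentration regime where the pulled-back measures approach Dirac masses and the classical center-of-mass argument can break down. The hyperbolic center-of-mass framework of \cite{FL20} supplies exactly the uniform, dimension-independent version of Hersch balancing required, while the trace-ratio averaging over the $m+1$ coordinate directions (three when $m=2$) is the device that replaces the older single ``maximizing direction'' trial function, simplifying the proof in dimension two and extending it cleanly to all $m \ge 2$.
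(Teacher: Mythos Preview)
There is a genuine gap: your trial functions $w^\pm = \sum_i v_i\,(T_\pm^\ast x_i)\,\mathbf{1}_{C_\pm}$ are \emph{not} in $H^1(S^m)$, and the proposed smoothing does not repair this with negligible cost. The functions $T_\pm^\ast x_i$ have no reason to vanish on the equator $\partial C_+ = \partial C_-$, so $w^\pm$ has a nonzero jump there. If you replace $\mathbf{1}_{C_\pm}$ by a cutoff $\chi_\epsilon$ supported in an $\epsilon$-neighbourhood of $C_\pm$ with $|\nabla\chi_\epsilon|\le C/\epsilon$, the cross term contributes
\[
\int_{S^m} |w^\pm|^2\,|\nabla\chi_\epsilon|^2\,e^{(m-2)\varphi}\,dv_{g_0}\ \gtrsim\ \frac{1}{\epsilon^2}\cdot \epsilon\cdot |\partial C_\pm| \ \sim\ \frac{1}{\epsilon},
\]
which blows up. This is exactly why the ``two disjoint pieces'' heuristic, which works for Neumann or Robin problems on genuinely disconnected Euclidean or hyperbolic domains (as in \cite{GL19,FL20}), fails on a \emph{closed} manifold: Neumann bracketing goes the wrong way, and there is no free boundary across which one may extend by zero. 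Your three-dimensional space $\operatorname{span}(1,w^+,w^-)$ is therefore not an admissible test space for the Courant--Fischer principle on $S^m$.

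The paper's proof is designed precisely to circumvent this obstacle. Instead of two independent caps, it uses a single cap $H=H_{p,t}$ together with the \emph{fold map} $F_H$, so that the trial functions $(T_{-c_H}\circ F_H)_j$ are even under the reflection $R_H$ and hence continuous across $\partial H$; they lie in $H^1(S^m)$ by construction. Orthogonality to the constant is obtained, as you do, by Hersch's center of mass. The price is that orthogonality to the first eigenfunction can no longer be arranged by a simple disjoint-support trick; it requires finding a zero of the vector field $V(p,t)=\int_{S^m}(T_{-c_H}\circ F_H)(y)\,f(y)\,dv_g$ over the parameter space $S^m\times[0,1]$. This is where Petrides's degree-theoretic lemma (\autoref{petrides}) enters, using the reflection symmetry $V(-p,0)=R_pV(p,0)$ at $t=0$ and constancy at $t=1$. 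Your proposal omits this topological step entirely, and without it (or an equivalent device producing globally $H^1$ trial functions orthogonal to both the constant and the first eigenfunction) the argument cannot close. The averaging/trace-ratio idea over the $m+1$ coordinate functions is indeed the key improvement over earlier work, but it must be applied to the folded trial functions, not to cap-supported ones.
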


\begin{rem}
A result by Druet \cite[Theorem 2]{D18} claims that inequality \autoref{newsecondeigenhigh} can go in the opposite direction for some metrics but as \autoref{newthmhighdim} shows, that is not correct.
\end{rem}


Girouard, Nadirashvili and Polterovich \cite{GNP09} in odd dimensions and Petrides \protect{\cite{P14}} in all dimensions established a weaker inequality than \autoref{newthmhighdim}, with an additional constant factor strictly greater than 1 multiplied to the right side of \autoref{newsecondeigenhigh}. By averaging over all $m+1$ valid trial functions before applying H\"{o}lder’s inequality, we avoid the constant factor. This is explained at the end of the proof.

\subsection{\bf Related work}
For developments related to maximizing eigenvalues on surfaces among metrics in a fixed conformal class, we briefly mention the following papers. Petrides \cite[Theorem 1]{P17} showed that under some natural eigenvalue gap conditions, each conformal class contains a maximal metric for the $k$-th area-normalized eigenvalue on a surface. A recent proof using a bubbling-tree construction for harmonic maps was given by Karpukhin, Nadirashvili, Penskoi and Polterovich \cite{KNPP20}. Karpukhin and Stern \cite{KS20} investigated the existence of conformally maximizing metrics on general surfaces using min-max energy problems for sphere-valued maps.

The eigenvalues on $S^2$ have been extensively studied after Hersch's work. For the $k$-th eigenvalue on $S^2$, Kao, Lai and Osting \cite{KLO17} provided numerical support for the conjecture that $k$ disjoint spheres give the maximizers. Nadirashvili and Sire \cite{NS17} confirmed the sphere conjecture for $k=3$. Karpukhin, Nadirashvili, Penskoi and Polterovich \cite[Theorem 1.2]{KNPP19}, by relying importantly on earlier work of Petrides \cite{P17}, proved this conjecture for all $k$: the $k$-th eigenvalue on $S^{2}$ is maximized by a sequence of metrics degenerating to a union of $k$ disjoint identical round spheres.

Maximization of eigenvalues on the projective plane $\R \mathbb{P}^2$ has been investigated by Li and Yau \cite{LY82} for $\lambda_1$, by Nadirashvili and Penskoi \cite{NA18} for $\lambda_2$, and by Karpukhin \cite[Theorem 1.2]{K19} for all $\lambda_k$.

Some relevant problems for optimizing eigenvalues on plane domains with boundary conditions can be found in the following papers. Girouard and Laugesen \cite{GL19} showed that the third eigenvalue of the Robin Laplacian on a simply-connected planar domain of given area is maximal for a disjoint union of two disks. Karpukhin and Stern \cite[Theorem 1.5, and 1.13]{KS20} showed that the first two Steklov eigenvalues satisfy sharp inequalities by using min-max quantities associated to the energy functional from the closed surface to a sphere. Girouard, Karpukhin and Lagac\'{e} \cite[Theorem 1.2]{GKL20} obtained the sharp upper bound for all Steklov eigenvalues in $\R^{2}$ by using the sharp upper bounds for weighted Neumann eigenvalues and homogenizing with a perforated domain to approximate the Neumann eigenvalues with Steklov eigenvalues. 

\section{\bf Proof of Theorem 1 and Theorem 2} 
We will prove the theorems together since \autoref{thm2dim} is simply the 2-dimensional case of \autoref{newthmhighdim}.
\subsection{\bf Overview of proofs}
For the first eigenvalue $\lambda_{1}$, Hersch \cite{H70} and El Soufi and Ilias \cite{EI86} found trial functions orthogonal to the constant by composing the eigenfunctions of the round sphere (which are the $m+1$ coordinate functions) with a M\"{o}bius transformation to move the center of the mass to the origin. For the second eigenvalue $\lambda_{2}$, Nadirashvili \cite{N02} in dimension 2 and then Girouard, Nadirashvili and Polterovich \cite{GNP09} in odd dimensions and Petrides \cite{P14} in all dimensions folded the measure across a spherical cap and rearranged with respect to the center of mass, to get functions that are even with respect to reflection in the cap boundary and orthogonal to the constant. Among these families of functions, they constructed by a topological argument a two dimensional subspace of “maximizing directions”. This subspace contains a trial function that is orthogonal to the constant and also to the first non-constant eigenfunction of the metric and putting this trial function into the Rayleigh quotient finishes the proof.

The main simplification in this paper is that we find $m+1$ trial functions that are orthogonal to the constant and the first eigenfunction. The two-step method for doing this is first to get orthogonality to the constant by choosing a M\"{o}bius transformation, which can be done due to uniqueness of the Hersch center of mass point, and then obtain orthogonality to the first excited state by choosing the spherical cap suitably, with the help of Petrides's degree theory lemma (\autoref{petrides} below). This two-step approach is adapted from the work of Girouard and Laugesen \cite{GL19} for Robin eigenvalues in the plane and Freitas and Laugesen \cite{FL20} for domains in Euclidean and hyperbolic space. An alternative one-step approach for obtaining the orthogonality has been developed by Karpukhin and Stern \cite[Lemma 4.2]{KS20}. As a remark, Hersch and all later authors were relying on the methods of Szeg\H{o} \cite{S54} and Weinberger \cite{W56} for domains in Euclidean space.

\subsection{First eigenfunctions on the sphere} 
Let us briefly review the first eigenfunctions on the unit sphere $S^{m}$ with respect to the standard metric $g_{0}$. The eigenfunctions are the spherical harmonics of degree 1, that is, the coordinate functions $y_{j}$ for $j=1, \dots, m+1$. The eigenvalue $\lambda_{1}(S^{m},g_{0})=m$ has multiplicity $m+1$ since
\begin{equation}
\label{first}
-\Delta_{g_{0}}y_{j}=my_{j}, \qquad j=1 ,\dots, m+1.
\end{equation}

\subsection{M\"{o}bius transformations} 
M\"{o}bius transformation will play an important role in defining the  trial functions. We will write $\B^{m+1}$ for the unit ball in $\R^{m+1}$, so that $S^{m} = \partial \B^{m+1}$. Consider the following M\"{o}bius transformations on the closed ball \cite[eq.(2.1.6)]{S16}, parametrized by $x \in \B^{m+1}$ and defined by 
\[
T_{x}: \overline{\B^{m+1}} \to \overline{\B^{m+1}},
\]
\begin{equation}
\label{mobius}
T_{x}(y)=\frac{(1+2x \cdot y +|y|^{2})x + (1-|x|^{2})y}{1+2x \cdot y +|x|^{2}|y|^{2}}
, \qquad  y \in \overline{\B^{\, m+1}}.
\end{equation}
Note $T_{0}$ becomes just the identity map on the ball. Also, $T_{x}(0)=x$, $T_{-x}=(T_x)^{-1}$, and $T_{x}$ maps $S^{m}$ to $S^{m}$, fixing the points $y= \pm x/|x|$. 

\subsection{Spherical caps, reflection, and folding} 
Next, we describe spherical caps. For any unit vector $p$ on the sphere, define the closed hemisphere
\[
H_{p}=\{y \in S^{m} : y \cdot p \leq 0\}, \qquad p \in S^m.
\]
Then let 

\[
H \equiv H_{p,t} = T_{pt}(H_{p}), \qquad p \in S^m , \qquad t \in [0,1),
\]
be the spherical caps defined as the image of the hemisphere under a M\"{o}bius transformation. (We could also consider negative values of $t$, but only the positive values will be needed in this paper.) Since the M\"{o}bius transformation sends a boundary to a boundary, that is, $T_{pt} (\partial H_{p})=\partial H_{p,t}$, we can write the hyperbolic cap explicitly as
\[
H_{p,t}= \left\{ y \in S^{m} : y \cdot p \leq \frac{2t}{1+t^{2}} \right\},
\]
which can be checked using \autoref{mobius}. Note that as $t$ approaches $1$, the spherical cap $H_{p,t}$ expands toward $p$ and covers almost all the sphere except for $p$ itself.

Define $R_{p}$ to be reflection in the hyperplane through the origin and perpendicular to the unit vector $p$:
\[
R_{p}(y)=y -2 (y \cdot p) p, \qquad y \in S^m.
\]
Note that $R_{p}(p)=-p$, which implies that the reflection map sends $p$ to its antipodal point. We can define a reflection map across the boundary of the general spherical cap $H_{p,t}$ by conjugation, that is, let
\begin{equation}
\label{conjug}
R_{H} \equiv R_{p,t}= T_{pt} \circ R_{p} \circ ( T_{pt})^{-1}: S^{m} \to S^{m}.
\end{equation} 
Here, the inverse of the M\"{o}bius transformation can be written as $ (T_{pt})^{-1}=T_{-pt}$, which can be checked by calculation. Finally, define a ``fold map" that reflects the complement of the spherical cap across the boundary:
\[
F_{H}(y) \equiv F_{p,t}(y)=
\begin{cases}
y , & y \in H , \\
	R_{H}(y) , & y \in S^{m} \setminus H.
\end{cases}
\]
Observe that the map sends the boundary of the spherical cap $\partial H$ to itself. For simplicity, we will exploit the notation $F_{H}$ instead of $F_{p,t}$ when it is clear from the context (similarly for $R_{H}$). 

\subsection{Center of mass}
The concept of center of mass plays an important role in our proof. We say a point $c \in \B^{m+1}$ is the center of mass of a Borel measure $\mu$ on the sphere if it satisfies 
\[
\int_{S^m} T_{-c}(y)  \, d\mu(y) =0.
\]
In other words, after the measure is pushed forward by the M\"{o}bius transformation $T_{-c}$, the center of mass lies at the origin:
\[
\int_{S^m} y   \,  d[(T_{-c})_{*} \mu](y) =0.
\]

\subsection{Trial functions}

Let $Y(y)=y$ be the identity map on the sphere, so that each component $Y_{j}(y)=y_{j}$ is an eigenfunction for the round sphere. 
Construct a map
\begin{equation}
\label{vectorfield}
y \mapsto (Y \circ T_{-c_{H}} \circ F_{H})(y)
\end{equation}
for some point $c_{H} \equiv c_{p,t}$ to be chosen dependent on the spherical cap $H$. The map first folds the sphere onto one side of the spherical cap and then renormalizes with $T_{-c_H}$. We will drop the identity map ``$Y$" in the work that follows since its role in \autoref{vectorfield} is mainly to emphasize that the trial functions, which are the $m+1$ components of \autoref{vectorfield}, are found by precomposing the coordinate functions $y_{j}$ with the M\"{o}bius transformation and the fold.

The $m+1$ trial functions need to be orthogonal to the constant and to the first eigenfunction in order to be valid trial functions for $\lambda_2$. The dimension of our parameter space matches the number of orthogonality conditions, as follows. Each orthogonality requirement gives $m+1$ conditions, making $2m+2$ conditions in total. The parameters $(p,t,c_{H}) \in S^{m} \times [0,1) \times \R^{m+1}$ belong to a space of dimension $2m+2$. Hence, our construction suggests that it is possible to satisfy the orthogonality conditions using the parameters. 

\subsection{Orthogonality of trial functions to the constant.} 
By the definition of the trial functions in \autoref{vectorfield}, for orthogonality to the constant, we want the integral of the vector of this trial functions over $S^{m}$ to be zero, meaning
\begin{equation}
\label{constant}
\int_{S^{m}}^{} (T_{-c_{H}} \circ F_{H})(y) \, dv_{g}=0,
\end{equation}
where $v_{g}$ is the volume measure with respect to the metric $g$. The existence of a center of mass point $c_{H}$ satisfying \autoref{constant} is due to the result of Hersch \cite{H70} applied to the measure $\mu = (F_{H})_{*} v_{g}$, which is the pushforward under the fold map of the volume measure $v_g$. 

We later need $c_{H}$ to be unique, and depend continuously on the parameters of $H$. For those facts we rely on Laugesen \cite[Corollary 5]{L20b}. The hypothesis of that corollary is satisfied because the pushforward measure is a finite Borel measure and $0=\mu(\{y\}) < \frac{1}{2}\mu(S^{m})$ for all $y \in S^{m}$. The conclusion of the corollary gives uniqueness. Moreover, the center of mass $c_{H} = c_{p,t}$ depends continuously on $(p,t) \in S^{m} \times [0,1)$ as now we explain. (We later use this fact to prove the orthogonality to the first excited state.) It is enough to show that the measure $(F_{H})_{*}v_{g} \equiv v_{g,H}$ is weakly continuous on $(p,t)$, because then we can apply Laugesen \cite[Corollary 5]{L20b}.                                                                                                                                                                                                                                                                                                                                                                                                                                                                                                                                                                                                                                                                                                                                                                                                                                                                                                                                                                                                                                                                                                                                                                                                                                                                                                                                                                                                                                                                                                                                                                            Suppose that $t \in [0,1)$ and $p \in S^{m} $, and let $(p_{k},t_{k})$ be a sequence converging to $(p,t)$. Write $H_{k} \equiv H_{p_{k},t_{k}}$. To get weak continuity of the measures, it is enough to show that for any continuous function $\psi$ on $S^{m}$,
\begin{align*}
\int_{S^{m}}^{} \psi \, dv_{g,H_k} \to \int_{S^{m}}^{} \psi \, dv_{g,H} \qquad \text{as } k \to \infty.
\end{align*}
Equivalently, we want $\int_{S^{m}}^{} (\psi \circ F_{H_k}) \, dv_{g} \to \int_{S^{m}}^{} (\psi \circ F_{H})  \, dv_{g}$ as $k \to \infty$. Note that $F_{H_{k}}$ coverges pointwise to $F_{H}$. Since $\psi$ is a continuous function, it is bounded, and so dominated convergence gives the result. 

The behavior of the center of mass point in the limiting case $t \to 1$ will be important for our argument. Let us consider $p_{k} \to p$ and $t_{k} \to 1$. Denote the center of mass with respect to $v_g$ as $c(g)$, which satisfies
\begin{equation}
\label{center}
\int_{S^{m}} T_{-c(g)}(y) \, dv_g=0,
\end{equation} 
by the definition of center of mass. Then, our claim is that the sequence of centers of mass corresponding to $(p_k, t_k)$ converges to that of the whole sphere, that is, 
\begin{equation}
\label{limitcenter}
c_{p_k ,t_k} \to c_{p,1}=c(g).
\end{equation} 
Notice the limiting value does not depend on $p$. To understand why this claim works, we need to extend some of the definitions above. Although the M\"{o}bius transformation and the fold map do not extend continuously when $t=1$, the pushforward measure does have a  continuous extension: from what we observed earlier about the behavior of $H_{p,t}$ as $t$ goes to 1 (it extends to cover all of the sphere except for $p$), the fold map tends pointwise to the identity except at $p$. So $v_{g,H_{k}}$ converges to $v_g$ weakly and we can still apply \cite[Corollary 5]{L20b} to get convergence of the center of mass points. 

\subsection{Orthogonality of trial functions to the first excited state.} 
Let $f$ be a first excited state, meaning an eigenfunction of $\lambda_{1}(S^{m},g)$. We want to show that there exists some spherical cap $H$ such that the vector of trial functions \autoref{vectorfield} is orthogonal to the first excited state $f$. Equivalently, we want the vector field 
\[
V(p,t)= \int_{S^{m}}^{}T_{-c_{H}} (F_{H}(y)) f(y) \, dv_{g}
\]
to vanish at some point $(p,t) \in S^{m} \times [0,1)$, where $H=H_{p,t}$. Using the continuous dependence result in the previous section, the vector field $V(p,t)$ is continuous. We will exploit the following topological result by Petrides \cite{P14}, which was recently given a new proof by Freitas and Laugesen \cite[Theorem 2.1]{FL20}.

\begin{proposition}[Petrides \protect{\cite{P14}{, claim 3}}]\label{petrides}Suppose $m \geq 1$. If $ \phi:S^{m} \to S^{m}$ is continuous and 
\begin{equation}
\label{refsym}
\phi(-p)=R_{p}(\phi(p)), \qquad p \in S^{m},
\end{equation}
then the degree of $\phi$ is nonzero, meaning it is not homotopic to a constant map. In particular, if m is odd then $\deg(\phi)=1$ and if $m$ is even then $\deg(\phi)$ is odd.
\end{proposition}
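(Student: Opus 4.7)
The strategy is to build a homotopy from $\phi$ to the identity map $\mathrm{id}_{S^m}$ through maps still satisfying \eqref{refsym}. Note that the identity trivially satisfies \eqref{refsym}, since $R_p(p) = p - 2(p \cdot p) p = -p = \mathrm{id}(-p)$, and it has degree $1$. The essential observation that makes this viable is that, because $R_p$ acts linearly on $\R^{m+1}$, the class of maps obeying \eqref{refsym} is preserved under forming affine combinations of vectors in $\R^{m+1}$.

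Concretely, I would form the straight-line homotopy $\Phi_s \colon S^m \to \R^{m+1}$ defined by
\[
\Phi_s(p) = (1-s)\,p + s\,\phi(p), \qquad s \in [0,1].
\]
Combining $R_p(p) = -p$ with $\phi(-p) = R_p(\phi(p))$ and the linearity of $R_p$, one checks directly that $\Phi_s(-p) = R_p(\Phi_s(p))$ for every $s$. Setting $\Phi_s(p) = 0$ forces $\phi(p)$ to be a negative scalar multiple of $p$, which upon comparing norms pins down $s = 1/2$ and $\phi(p) = -p$.

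In the generic case where $\phi(p) \neq -p$ for every $p \in S^m$, the normalization $\Phi_s / |\Phi_s|$ is a well-defined continuous map $S^m \times [0,1] \to S^m$ that respects \eqref{refsym} at every level and connects $\mathrm{id}_{S^m}$ at $s = 0$ to $\phi$ at $s = 1$. Homotopy invariance of degree then delivers $\deg(\phi) = \deg(\mathrm{id}_{S^m}) = 1$, which is in particular nonzero.

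The main obstacle is the case in which the ``bad set'' $\{p \in S^m : \phi(p) = -p\}$ is nonempty, since then $\Phi_{1/2}$ vanishes and the straight-line homotopy degenerates. For an isolated bad set I would use a perturbation: writing $\phi = \alpha\,\mathrm{id} + \beta$, where \eqref{refsym} forces $\alpha \colon S^m \to \R$ to be an even scalar function and $\beta$ to be an even tangent vector field on $S^m$, one can modify $\beta$ locally in an even fashion near $\pm p_0$ to avoid vanishing there, producing a nearby map $\phi_\varepsilon$ still satisfying \eqref{refsym} but with $\phi_\varepsilon(p) \neq -p$ everywhere; the generic case then yields $\deg(\phi) = 1$ in odd dimensions. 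In even dimensions the hairy ball theorem prevents $\beta$ from being everywhere nonzero (so genuinely different-degree examples like $\phi = -\mathrm{id}$, of degree $(-1)^{m+1}=-1$, are forced to exist), and the odd-degree conclusion requires a finer argument, naturally via a $\bmod 2$ preimage count at a regular value $q$: the symmetry \eqref{refsym} pairs up preimages $p \in \phi^{-1}(q)$ with $p \cdot q = 0$, reducing $\deg_2(\phi)$ to the parity of the remaining ``solo'' preimages with $p \cdot q \neq 0$, which a careful tracking of the perturbation should show is always odd.
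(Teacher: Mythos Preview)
The paper does not prove this proposition; it is quoted from Petrides \cite{P14} with a pointer to the alternative proof by Freitas and Laugesen \cite{FL20}. So there is no in-paper argument to compare against, and your proposal must be judged on its own.

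Your straight-line homotopy handles the generic case cleanly: linearity of $R_p$ does propagate the symmetry \eqref{refsym} to every $\Phi_s$, the only possible zero is at $s=1/2$ with $\phi(p)=-p$, and when that set is empty you correctly obtain $\deg(\phi)=1$. The difficulty is that the non-generic case carries essentially all the content, and there your argument has real gaps. In odd dimensions, the decomposition $\phi=\alpha\,\mathrm{id}+\beta$ with $\alpha$ even and $\beta$ an even tangent field is correct, but ``modify $\beta$ locally in an even fashion'' does not by itself remove the bad set: a bump perturbation at $\pm p_0$ can push the zero of $\Phi_{1/2}$ nearby rather than destroy it, so one needs a transversality reduction plus an index computation that you have not supplied. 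Note too that a \emph{global} nonvanishing even tangent field, which would make the perturbation trivial, fails to exist already on $S^1$ (writing $v=f(\theta)\,ie^{i\theta}$, evenness forces $f(\theta+\pi)=-f(\theta)$, hence a zero by the intermediate value theorem), so the local route is genuinely delicate.

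For even $m$ your sketch is weaker still. You correctly observe that preimages $p\in\phi^{-1}(q)$ with $p\cdot q=0$ pair with $-p$, but the assertion that the remaining ``solo'' preimages are odd in number is precisely the statement to be proved and is left as ``a careful tracking \dots\ should show.'' Since $-\mathrm{id}$ (degree $-1$) already shows that $\deg(\phi)=1$ can fail here, the even case cannot be reached by any variant of your homotopy-to-identity strategy; a genuinely different argument, as in the cited references, is required.
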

This theorem implies that a continuous map between spheres with the reflection property \autoref{refsym} has nonzero degree. We will prove the reflection symmetry of our vector field \autoref{vectorfield} when $t=0$ by dividing into a few steps. This part proceeds similarly to Freitas and Laugesen \cite{FL20} in hyperbolic space.

First observe that the relationship between the reflection map and a M\"{o}bius transformation:
\begin{equation}
\label{TandR}
T_{R_{p}x}(R_{p}y)=R_{p}T_{x}(y)
\end{equation}
for any points $p, x, y$. This can be verified by using the definition in \autoref{mobius}. Note that although formula \autoref{TandR} looks somewhat complicated, its Euclidean analogue (with M\"{o}bius transformations replaced by translations) simply says that $R_{p}x+R_{p}y=R_{p}(x+y)$, which is true because $R_p$ is a linear map.

\begin{lemma}[Reflection invariance of the center of the mass] \label{invcenter}
For $p \in S^{m}$,
\[
c_{-p,0}=R_{p}(c_{p,0}).
\]
\end{lemma}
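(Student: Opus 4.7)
The plan is to reduce the identity $c_{-p,0}=R_p(c_{p,0})$ to the defining equation for $c_{p,0}$ by exhibiting the pushforward measure for $-p$ as the $R_p$-pushforward of the one for $p$, and then appealing to equivariance of the Hersch center of mass under reflections.

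First I would show that at $t=0$ the two fold maps differ by a single reflection. Since a reflection through a hyperplane does not depend on the orientation of its normal, $R_{-p}=R_p$, and when $t=0$ the conjugation \autoref{conjug} gives $R_{H_p}=R_p$. Comparing the piecewise definitions of $F_{H_p}$ and $F_{H_{-p}}$ on the hemispheres $\{y\cdot p\geq 0\}$ and $\{y\cdot p\leq 0\}$ then yields the pointwise identity
\[
F_{H_{-p}}(y) \;=\; R_p\bigl(F_{H_p}(y)\bigr), \qquad y\in S^m,
\]
by a quick two-case check. Pushing forward $v_g$ gives the measure identity $(F_{H_{-p}})_* v_g = (R_p)_*\,(F_{H_p})_* v_g$.

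The main step is an equivariance statement: for any Borel measure $\mu$ on $S^m$ satisfying the hypothesis of Laugesen's uniqueness corollary, the center of mass transforms under reflection as
\[
c\bigl((R_p)_*\mu\bigr) \;=\; R_p\bigl(c(\mu)\bigr).
\]
To prove it, start from $\int_{S^m} T_{-c(\mu)}(y)\,d\mu(y)=0$ and change variables $y=R_p(z)$ to obtain
\[
\int_{S^m} T_{-c(\mu)}(R_p z)\,d\bigl((R_p)_*\mu\bigr)(z) \;=\; 0.
\]
Apply $R_p$ to both sides and use identity \autoref{TandR}, rearranged as $R_p\,T_{-c(\mu)}(R_p z)=T_{-R_p c(\mu)}(z)$, to conclude that $R_p(c(\mu))$ satisfies the defining equation for the center of mass of $(R_p)_*\mu$. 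Since $R_p$ is a homeomorphism preserving total mass and atoms, the pushforward $(R_p)_*\mu$ inherits the hypothesis of Laugesen's corollary, so uniqueness forces $c((R_p)_*\mu)=R_p(c(\mu))$.

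Combining the two ingredients with $\mu=(F_{H_p})_* v_g$ gives
\[
c_{-p,0} \;=\; c\bigl((R_p)_*(F_{H_p})_* v_g\bigr) \;=\; R_p\bigl(c((F_{H_p})_* v_g)\bigr) \;=\; R_p(c_{p,0}),
\]
proving the lemma. The main obstacle is organizing the algebra in the equivariance step so that \autoref{TandR} appears in precisely the right form; once that identity is set up and uniqueness is invoked, the rest is bookkeeping.
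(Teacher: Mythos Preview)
Your proof is correct and follows essentially the same route as the paper: establish the pointwise identity $F_{-p,0}=R_p\circ F_{p,0}$, then use \autoref{TandR} to show that $R_p(c_{p,0})$ satisfies the center-of-mass equation for the folded measure at $-p$, and conclude by uniqueness. The only cosmetic difference is that you package the second step as a general reflection-equivariance lemma for Hersch centers of mass, whereas the paper carries out the same computation directly for the specific measure $(F_{H_{p,0}})_*v_g$.
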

\begin{proof}
It is clear from the definition of the spherical cap that the boundaries of $H_{p,0}$ and $H_{-p,0}$ are equal. The reflection $R_{p}$ maps each of these hemispherical caps to each other, $R_{p}(H_{p,0})=H_{-p,0}$, and vice versa. This implies a simliar relationship between the fold maps, which is,
\begin{equation}
\label{FandR}
F_{-p,0}(y)=R_{p}(F_{p,0}(y)).
\end{equation}

In order to show that the reflection map sends the center of mass $c_{p,0}$ to $c_{-p,0}$, we want to check whether the reflection of $c_{p,0}$ satisfies the condition \autoref{constant} for $H_{-p,0}$. Therefore, it is enough to show that 
\begin{align*}
\int_{S^{m}}^{}T_{-R_{p}(c_{p,0})}(F_{-p,0}(y)) \, dv_{g}=0.
\end{align*}
The left-hand side above is equal to 
\begin{align*}
\int_{S^{m}}^{}T_{-R_{p}(c_{p,0})}(F_{-p,0}(y)) \, dv_{g}
&=\int_{S^{m}}^{}R_{p}(T_{-c_{p,0}}(F_{p,0}(y)))\, dv_{g} \qquad \text{by \autoref{TandR} and \autoref{FandR}}
\\&=R_{p}\int_{S^{m}}^{}T_{-c_{p,0}}(F_{p,0}(y))\, dv_{g} \qquad \text{by linearity of $R_{p}$}
\\&=0,
\end{align*}
by the center of mass condition \autoref{constant} with $t=0$.
\end{proof}
Now, we want to show that our vector field satisfies the reflection symmetry property.
\begin{lemma}[Reflection symmetry when $t=0$]\label{refsymt=0}
For all $p \in S^{m}$
\[
V(-p,0)=R_{p}(V(p,0)).
\]
\end{lemma}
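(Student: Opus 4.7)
The plan is to chain together the three ingredients already established: the reflection invariance of the center of mass (\autoref{invcenter}), the analogous invariance \autoref{FandR} of the fold map, and the intertwining identity \autoref{TandR} between reflections and Möbius transformations. Each of these supplies exactly one of the three ``moving parts'' in the definition of $V(-p,0)$, so the proof should reduce to a short sequence of substitutions followed by pulling the linear map $R_p$ outside the integral.

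Concretely, I would start from
\[
V(-p,0)=\int_{S^{m}} T_{-c_{-p,0}}\bigl(F_{-p,0}(y)\bigr)\,f(y)\,dv_{g},
\]
then use \autoref{invcenter} to replace $c_{-p,0}$ by $R_p(c_{p,0})$ and \autoref{FandR} to replace $F_{-p,0}(y)$ by $R_p(F_{p,0}(y))$. Since $R_p$ is linear we may rewrite $-R_p(c_{p,0})=R_p(-c_{p,0})$, and then \autoref{TandR} (applied with $x=-c_{p,0}$ and $y$ replaced by $F_{p,0}(y)$) yields
\[
T_{-R_p(c_{p,0})}\bigl(R_p(F_{p,0}(y))\bigr)=R_p\bigl(T_{-c_{p,0}}(F_{p,0}(y))\bigr).
\]
The scalar factor $f(y)$ commutes with $R_p$, and linearity of $R_p$ lets us pull it out of the integral, giving
\[
V(-p,0)=R_p\!\int_{S^{m}} T_{-c_{p,0}}\bigl(F_{p,0}(y)\bigr)\,f(y)\,dv_{g}=R_p\bigl(V(p,0)\bigr).
\]

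There is essentially no real obstacle here: all of the heavy lifting has been done in \autoref{invcenter} (which in turn used the uniqueness of the Hersch center of mass) and in the algebraic identities \autoref{TandR} and \autoref{FandR}. The only minor care needed is to handle the sign in the subscript of the Möbius transformation correctly, and to notice that $f$ being scalar means it does not interfere with the vector-valued manipulations. Everything else is just careful substitution.
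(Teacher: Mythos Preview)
Your proposal is correct and follows essentially the same argument as the paper's proof: substitute \autoref{invcenter} and \autoref{FandR} into the definition of $V(-p,0)$, apply the intertwining identity \autoref{TandR}, and then pull the linear map $R_p$ outside the integral. Your write-up is in fact slightly more explicit about the sign handling $-R_p(c_{p,0})=R_p(-c_{p,0})$, which the paper leaves implicit.
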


\begin{proof}
We can calculate
\begin{align*}
V(-p,0)&= \int_{S^{m}}T_{-c_{-p,0}}(F_{-p,0}(y))f(y) \, dv_{g} 
\\&=  \int_{S^{m}}T_{-R_{p}c_{p,0}}(R_{p}F_{p,0}(y))f(y) \, dv_{g} \qquad \text{by \autoref{invcenter} and \autoref{FandR}}
\\&= \int_{S^{m}}R_{p}T_{-c_{p,0}}(F_{p,0}(y))f(y) \, dv_{g} \qquad \text{by \autoref{TandR}}
\\ &=R_{p} \int_{S^{m}}T_{-c_{p,0}}(F_{p,0}(y))f(y) \, dv_{g} \qquad \text{by linearity of }R_{p}
\\&=R_{p}(V(p,0)).
\end{align*} 
\end{proof}
We need to observe what happens to the vector field when $t \to 1$. The lemma below shows that the vector field does not depend on $p$ in the limit.
\begin{lemma}[$t \to 1$] \label{indep}
As $p \to  q \in S^{m}$ and $t \to 1$,   
\[
 V(p,t) \to V(g)
\]
where $V(g)=\int_{S^{m}}^{}T_{-c(g)}(y)f(y) \, dv_{g}$ is a constant vector (independent of $q$).
\end{lemma}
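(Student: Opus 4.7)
The plan is to apply the dominated convergence theorem to the integral defining $V(p,t)$. Fix an arbitrary sequence $(p_k, t_k) \to (q, 1)$ in $S^m \times [0,1)$, set $H_k = H_{p_k, t_k}$, and write
\[
V(p_k, t_k) = \int_{S^m} T_{-c_{H_k}}(F_{H_k}(y))\, f(y) \, dv_g ;
\]
the goal is to show this converges to $\int_{S^m} T_{-c(g)}(y)\, f(y) \, dv_g$.

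First I would pin down pointwise convergence of the integrand away from the singular point $q$. Two ingredients are already on the table from the discussion leading up to \autoref{limitcenter}: the fold map satisfies $F_{H_k}(y) \to y$ for every $y \neq q$, because the caps $H_k$ exhaust $S^m \setminus \{q\}$ and for any such $y$ one eventually has $y \in H_k$ so that $F_{H_k}(y)=y$; and the centers of mass satisfy $c_{H_k} \to c(g)$ by \autoref{limitcenter}, with $c(g)$ lying strictly inside $\B^{m+1}$ thanks to the Laugesen uniqueness result cited earlier. Since the M\"obius formula \autoref{mobius} is jointly continuous on $\B^{m+1} \times \overline{\B^{m+1}}$ and the points $c_{H_k}$ eventually lie in a compact subset of $\B^{m+1}$, this gives $T_{-c_{H_k}} \to T_{-c(g)}$ uniformly on $\overline{\B^{m+1}}$. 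Combining, $T_{-c_{H_k}}(F_{H_k}(y)) \to T_{-c(g)}(y)$ for every $y \neq q$, hence $v_g$-almost everywhere since the Riemannian volume measure assigns zero mass to any single point.

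For the dominating function, note that $T_{-c}$ maps $\overline{\B^{m+1}}$ into itself, so the vector-valued integrand is bounded in norm by $\|f\|_\infty$, a finite constant on the compact sphere and thus $v_g$-integrable. Dominated convergence then yields $V(p_k, t_k) \to V(g)$, and the limit is manifestly independent of $q$ since $c(g)$ and $f$ do not depend on the sequence. I do not anticipate a serious obstacle: the substantive analytic work has already been done in establishing \autoref{limitcenter} and the pointwise convergence of $F_{H_k}$ to the identity. The only mildly delicate point is ensuring that $c(g)$ lies in the open ball, so that $T_{-c(g)}$ is an honest M\"obius transformation of $\overline{\B^{m+1}}$, and this is handled by the non-atomicity of $v_g$ via the Laugesen corollary already invoked.
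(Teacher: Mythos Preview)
Your proof is correct and follows essentially the same approach as the paper's own proof, which also invokes pointwise convergence of the fold map to the identity off the singular point, the center of mass convergence \autoref{limitcenter}, and dominated convergence. You have simply supplied more of the details (the explicit dominating function $\|f\|_\infty$, the uniform convergence of $T_{-c_{H_k}}$, and the remark that $c(g)$ lies in the open ball), all of which are implicit in the paper's two-line argument.
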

\begin{proof}
In the limit, the fold map $F_{H_{p,t}}$ goes to the identity map except at the single point $q$. And $c_{p,t} \to c(g)$ by \autoref{limitcenter}. Thus, it holds that
\[
V(p,t) = \int_{S^{m}}^{}T_{-c_{p,t}}(F_{p,t}(y))f(y)  \, dv_{g} \to \int_{S^{m}}^{} T_{-c(g)} (y)f(y) \, dv_{g}=V(g),
\]
by dominated convergence. 
\end{proof}
By the last lemma, we can regard $V(p,t)$ as extending continuously to $t=1$, by writing $V(p,1)=V(g)$.
Combining the results we have vanishing of the vector field:
\begin{proposition}[Vanishing of the vector field]\label{vanish}
$V(p,t)=0$ for some $p \in S^{m}$ and $t \in [0,1]$.
\end{proposition}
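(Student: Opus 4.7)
The plan is to argue by contradiction using a degree argument on the cylinder $S^m \times [0,1]$, exploiting the two boundary conditions: the reflection symmetry at $t=0$ from \autoref{refsymt=0} and the constancy at $t=1$ from \autoref{indep}.

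First I would suppose, toward a contradiction, that $V(p,t) \neq 0$ for every $(p,t) \in S^m \times [0,1]$. Since $V$ is continuous on this compact cylinder (continuous on $S^m \times [0,1)$ by the continuous dependence of $c_{p,t}$ established earlier, and continuously extended to $t=1$ by \autoref{indep}), the normalized map
\[
\phi(p,t) = \frac{V(p,t)}{|V(p,t)|}
\]
is a continuous map from $S^m \times [0,1]$ to $S^m$. In particular, the two slices $\phi_0(p) := \phi(p,0)$ and $\phi_1(p) := \phi(p,1)$ are homotopic through $\phi(\cdot,t)$, and so have the same topological degree.

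Next I would analyze each slice. At $t=1$, the map $\phi_1$ is a constant map, because $V(p,1) = V(g)$ is independent of $p$ by \autoref{indep}. Hence $\deg(\phi_1) = 0$. At $t=0$, I would check that $\phi_0$ satisfies the hypotheses of \autoref{petrides}: since $R_p$ is a linear isometry, \autoref{refsymt=0} gives $|V(-p,0)| = |R_p(V(p,0))| = |V(p,0)|$, so
\[
\phi_0(-p) = \frac{V(-p,0)}{|V(-p,0)|} = \frac{R_p(V(p,0))}{|V(p,0)|} = R_p\!\left( \frac{V(p,0)}{|V(p,0)|} \right) = R_p(\phi_0(p)).
\]
Thus \autoref{petrides} applies and forces $\deg(\phi_0) \neq 0$.

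This contradicts $\deg(\phi_0) = \deg(\phi_1) = 0$, so the assumption $V \neq 0$ everywhere must fail, proving the proposition. I do not anticipate any essential obstacle here beyond verifying continuity up to $t=1$, which has already been arranged by \autoref{indep}; the degree invariance under homotopy and the reflection-to-symmetry reduction are routine once the slice at $t=0$ is matched with \autoref{petrides}. The only subtle point worth stating carefully is that $|V(-p,0)| = |V(p,0)|$, which is what allows the reflection symmetry to pass from $V$ to its spherical normalization $\phi_0$.
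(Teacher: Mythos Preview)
Your argument is correct and matches the paper's proof essentially line for line: contradiction, normalize $V$, compare degrees of the slices at $t=0$ (nonzero via \autoref{petrides}) and $t=1$ (zero by constancy), and invoke homotopy invariance. Your extra remark that $|V(-p,0)|=|V(p,0)|$ because $R_p$ is an isometry is a detail the paper leaves implicit but is exactly the point needed to transfer the reflection symmetry from $V$ to its normalization.
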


\begin{proof}
Suppose not, that is, $V(p,t) \neq 0 $ for all $(p,t)$.

Consider $t=1$. The map $p \mapsto V(p,1)/|V(p,1)|$ from $S^{m}$ to $S^{m}$ is constant by \autoref{indep}. Hence its topological degree is zero.

On the other hand, when $t=0$, the map $p \mapsto V(p,0)/|V(p,0)|$ satisfies the reflection symmetry condition \autoref{refsym}, by \autoref{refsymt=0}. Hence, the map has nonzero degree by \autoref{petrides}.

Topological degree is a homotopy invariant for continuous maps between spheres. We have a contradiction and therefore, the vector field $V(p,t)$ vanishes at some point.
\end{proof}

\subsection{Rayleigh quotient estimate} 
Finally, we use a variational characterization to obtain the upper bound on the second eigenvalue. Fix the parameters $(p,t)$ at which the vector field $V(p,t)$ vanishes, as in \autoref{vanish}. First suppose $t<1$. The trial functions, which are components of \autoref{vectorfield}, are orthogonal to the constant and the first excited state for the metric $g$, by \autoref{constant} and \autoref{vanish}.

The Rayleigh principle says that
\begin{align}
\label{Rayleigh}
\lambda_{2}(S^{m},g) \leq \frac{\int_{S^m}^{}|\nabla_{\!g\,}u|_{g}^{2} \, dv_{g}}{\int_{S^m}^{}u^{2} \, dv_{g}}
\end{align}
whenever $u$ is a trial function in $H^{1}(S^m)$ that is orthogonal to the constant and the first eigenfunction. We apply this principle to the trial functions $(T_{-c_{H}} \circ F_{H})_{j}$ for $j=1, \dots , m+1$. By substituting them into \autoref{Rayleigh}, we get
\begin{equation}
\label{Rayleigh2}
\lambda_{2}(S^{m},g) \leq \frac{\int_{S^m}^{}|\nabla_{\!g\,}(T_{-c_{H}} \circ F_{H})_{j}|_{g}^{2} \, dv_{g}}{\int_{S^m}^{}(T_{-c_{H}} \circ F_{H})_{j}^{2} \, dv_{g}}, \qquad j=1, \dots , m+1.
\end{equation} 

Multiply by the denominator on both sides of the inequality \autoref{Rayleigh2} and sum for all $j$'s. The sum of the denominators is equal to the surface area of the sphere corresponding to the metric $g$, that is, $\int_{S^m}^{} \sum_j (T_{-c_{H}} \circ F_{H})_{j}^{2} \, dv_{g}=\Vol(S^{m},g)$, since $(T_{-c_H} \circ F_H)(y)$ is a point on the unit sphere and hence has magnitude 1. Hence the inequality becomes 
\[
\lambda_{2}(S^{m},g) \Vol(S^{m},g) \leq  \int_{S^m}^{} \sum_{j=1}^{m+1}|\nabla_{\!g\,}(T_{-c_{H}} \circ F_{H})_{j}|_{g}^{2} \, dv_{g}.
\]
By applying H\"{o}lder's inequality to the the right-hand side, 
\begin{align*}
 &\lambda_{2}(S^{m},g) \Vol(S^{m},g) \\ &\leq \left( \int_{S^m}^{}  \left( \sum_{j=1}^{m+1} |\nabla_{\!g\,}(T_{-c_{H}} \circ F_{H} )_{j}|_{g}^{2}\right)^{\!\! m/2} dv_{g}  \right)^{ \! \!2/m} \! \Vol(S^{m},g)^{1-2/m}.
\end{align*}
(This application of H\"{o}lder is not needed when $m=2$.) We introduce a new notation: for $F:S^{m} \to S^{m}$ we denote $|\nabla_g F|_{g} = \sqrt{\sum_{j=1}^{m+1} |\nabla_{\!g\,}(F )_{j}|_{g}^{2}}$. Then
\begin{align}
 &\lambda_{2}(S^{m},g) \Vol(S^{m},g)^{2/m}  \nonumber
\\&\leq   \left(  \int_{S^m}^{}|\nabla_{\!g\,} (T_{-c_{H}} \circ F_{H})|_{g}^{m} \, dv_{g} \right)^{\! 2/m} \nonumber
\\& = \left( \int_{S^m}^{}|\nabla_{\!g_{0}\,}(T_{-c_{H}} \circ F_{H})|_{g_{0}}^{m} \, dv_{g_0} \right)^{\! 2/m} \nonumber
\\ &\quad \text{by changing $g$ to the round metric $g_{0}$ (conformal invariance, \autoref{confinv2})} \nonumber
\\ &=   \left( \int_{H }^{}|\nabla_{\!g_{0}\,}(T_{-c_{H}})|_{g_{0}}^{m} \, dv_{g_0}+ \int_{R_{H}(H) }^{}|\nabla_{\!g_{0}\,}(T_{-c_{H}} \circ R_{H})|_{g_{0}}^{m} \, dv_{g_0} \right)^{\! 2/m} \label{applyfold}
\end{align}
by splitting into integrals over $H$ and its complement and then using the definition of the fold map. The second integral in \autoref{applyfold} equals the first one, by conformal invariance of the Dirichlet integral (\autoref{confinv1}), since  $R_{H}$ is a composition of M\"{o}bius transformations and a reflection, as defined in \autoref{conjug}. The boundary of the spherical cap is counted twice but can be neglected since it has measure zero. So, we get
\begin{align}
 \lambda_{2}(S^{m},g) \Vol(S^{m},g)^{ 2/m}  \nonumber
 &\leq 2^{\, 2/m}  \left( \int_{H}^{}|\nabla_{\! g_{0}}(T_{-c_{H}})|_{g_{0}}^{m} \, dv_{g_0} \right)^{\! 2/m} \nonumber
\\&=  2^{\, 2/m} \left(  \int_{T_{-c_{H}}(H)}^{}|\nabla_{\! g_{0}} y|_{g_{0}}^{m} \, dv_{g_0} \right)^{\! 2/m}  \label{coor}
\end{align}
by conformal invariance, \autoref{confinv1}, applied with $f(y)=y$ being the identity. Next, bounding the integral in \autoref{coor} with the integral over the whole sphere, we find
\begin{align}
\label{dirichlet}
\int_{T_{-c_{H}}(H)}^{}|\nabla_{\!g_{0}\,} y|_{g_{0}}^{m} \, dv_{g_0} 
&<\int_{S^{m}}^{}|\nabla_{\!g_{0}\,} y|_{g_{0}}^{m} \, dv_{g_0} \nonumber
\\ &=\int_{S^{m}}^{} \left( \sum_{j=1}^{m+1}  |\nabla_{\!g_{0}\,}y_j|_{g_{0}}^2 \right)^{\!m/2} \, dv_{g_0} . 
\end{align}
Here, for each $y \in S^{m}$, we write $y_j = (e_j, y)$ where $e_j$ is a standard unit vector. By projecting the gradient of $y_j$ from the tangent space $T \R^{m+1}$ to $TS^{m}$, that is, projecting $e_j$ onto the sphere, we find
\begin{align}
\label{estimate}
\int_{T_{-c_{H}}(H)}^{}|\nabla_{\!g_{0}\,} y|_{g_{0}}^{m} \, dv_{g_0} 
&< \int_{S^{m}}^{} \left( \sum_{j=1}^{m+1}  | e_j - (e_j, y) y|^{2} \right)^{\! m/2} \, dv_{g_0}  \nonumber
\\ &=\int_{S^{m}}^{} \left( \sum_{j=1}^{m+1}  1 - (e_j, y)^2 \right)^{\! m/2} \, dv_{g_0}  \nonumber
\\ &=\int_{S^{m}}^{} \left( m+1 -\sum_{j=1}^{m+1} y_j^2 \right)^{\! m/2} \, dv_{g_0}  \nonumber 
\\ &=\int_{S^{m}}^{} m^{(m/2)} \, dv_{g_0}  \nonumber \text{ since $y$ is a point on the sphere}
\\ &= \sigma_m m^{(m/2)}. 
\end{align}
After putting this estimate back into \autoref{dirichlet} and then to \autoref{coor}, we obtain \autoref{newsecondeigenhigh}, which finishes the proof. 

For the 2-dimensional case in \autoref{newthmhighdim}, a simpler way of calculating the Dirichlet energy on the unit sphere is to apply Green's theorem to the Dirichlet integral,
\[
\int_{S^{2}}^{} |\nabla_{\!g_{0}\,} y_{j}|_{g_{0}}^{2} \, dv_{g_0} 
=-\int_{S^{2}}^{} y_{j} \, \Delta_{g_{0}\,} y_{j} \, dv_{g_0}
 =\lambda_{1}\int_{S^{2}}^{} y_{j}^{2} \, dv_{g_0},
\]
for $j =1, 2, 3$. Summation over $j$ gives the area $4\pi$ of the sphere times its eigenvalue $\lambda_1=2$  by \autoref{first}. This gives an upper bound of $16 \pi$ for \autoref{coor}, as wanted for \autoref{secondeigen}.

Lastly, let us consider the case when $t=1$. In this case, we define the trial functions $u_{j}(y)=(T_{-c(g)}(y))_j$ for $j =1, \dots, m+1$. Then the orthogonality conditions are satisfied by \autoref{center} and \autoref{indep} (where $V(g)=0$ since we assume that the vector field vanishes at $t=1$). The Rayleigh quotient calculation becomes simpler since we are working on the whole sphere and not on the spherical caps. Hence, the formula before \autoref{applyfold} becomes
\begin{align*}
 \lambda_{2}(S^{m},g) \Vol(S^{m},g)^{2/m} 
&\leq   \left( \int_{S^m }^{}|\nabla_{\!g_{0}\,} T_{-c(g)}|_{g_{0}}^{m} \, dv_{g_0} \right)^{\! 2/m}
\\&=  \left( \int_{S^m }|\nabla_{\!g_{0}\,} y |_{g_{0}}^{m} \, dv_{g_0} \right)^{\! 2/m} \qquad \text{by conformal invariance}
\\&= m \sigma_m^{2/m} <   m (2\sigma_m)^{2/m}
\end{align*}
where we obtained the last equality by using the estimate \autoref{estimate}. 

\appendix\section{Conformal invariance of Dirichlet integral}  
In this paper, two kinds of conformal invariance are used, for the change of metric by a conformal factor and the transplantation of a function by conformal diffeomorphisms. Note that both Lemmas in this appendix are well-established results and the purpose of stating them is to provide convenience for the readers. Recall that $T_{x}$ is a M\"{o}bius transformation on the closed unit ball and $R_{p}$ is reflection through the plane perpendicular to the unit vector $p$.

\begin{lemma}(M\"{o}bius transformation and reflection invariance of Dirichlet integral)\label{confinv1}
If $\Omega$ is a subdomain of $S^{m}$, $m \geq 2$, and each coordinate of $f: \Omega \to \R^{m+1}$ is a continuously differentiable function, then
\begin{align*}
\int_{\Omega}^{} |\nabla_{\!g_{0}} (f \circ T_{x})|_{g_{0}}^{m} \, dv_{g_{0}}=\int_{T_{x}(\Omega)}^{} |\nabla_{\!g_{0}} f|_{g_{0}}^{m} \, dv_{g_{0}}(y)
 \\ \text{and} \qquad 
\int_{\Omega}^{} |\nabla_{\!g_{0}} (f \circ R_{p})|_{g_{0}}^{m} \, dv_{g_{0}}=\int_{R_{p}(\Omega)}^{} |\nabla_{\!g_{0}} f|_{g_{0}}^{m} \, dv_{g_{0}}(y)
\end{align*}
where $\nabla_{\!g_{0}}$ and $v_{g_{0}}$ are the gradient and volume measure for the standard round metric on the sphere, respectively.
\end{lemma}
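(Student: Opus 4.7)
The plan is to reduce the claim to the standard fact that the $m$-energy is conformally invariant on an $m$-dimensional manifold. Both $T_x$ and $R_p$ are conformal diffeomorphisms of $(S^m,g_0)$: the reflection $R_p$ is the restriction of an orthogonal transformation of $\R^{m+1}$ and is therefore an isometry of $(S^m,g_0)$, while each Möbius transformation $T_x$ is the restriction to $S^m$ of a Möbius map of the closed ball, which is conformal. The conformality can be checked directly from \autoref{mobius} or quoted from Stoll \cite{S16}, and yields a positive smooth function $\mu_x$ on $S^m$ with $T_x^\ast g_0 = \mu_x^{\,2}\, g_0$.

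From $T_x^\ast g_0 = \mu_x^{\,2}\, g_0$ two elementary scaling identities follow. Dualizing to covectors gives $|\nabla_{\!g_0}(u\circ T_x)|_{g_0}(y) = \mu_x(y)\,|\nabla_{\!g_0} u|_{g_0}(T_x(y))$ for every scalar $C^1$ function $u$, and $\det(\mu_x^{\,2}\, g_0) = \mu_x^{\,2m}\det g_0$ gives $T_x^\ast(dv_{g_0}) = \mu_x^{\,m}\, dv_{g_0}$. Applying the gradient identity componentwise to $f$ and summing inside the notation $|\nabla_{\!g_0} F|_{g_0}^2 = \sum_{j=1}^{m+1}|\nabla_{\!g_0} F_j|_{g_0}^2$ introduced in the main text yields
\[
|\nabla_{\!g_0}(f\circ T_x)|_{g_0}^m = \mu_x^{\,m}\,\bigl(|\nabla_{\!g_0} f|_{g_0}^m\circ T_x\bigr).
\]
Integrating over $\Omega$ and applying the change-of-variables formula $\int_{\Omega}(h\circ T_x)\,\mu_x^{\,m}\, dv_{g_0} = \int_{T_x(\Omega)} h\, dv_{g_0}$ with $h=|\nabla_{\!g_0} f|_{g_0}^m$ produces the stated identity. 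The $R_p$ formula is the $\mu\equiv 1$ specialization, or equivalently a direct consequence of $R_p$ being an isometry.

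The only substantive verification is the conformality of $T_x$; once that is in hand, everything else is the bookkeeping that matches powers of $\mu_x$. The exact cancellation of $\mu_x^{\,m}$ between the gradient scaling and the Jacobian determinant is precisely why the \emph{$m$}-energy (and no other $p$-energy) is conformally invariant in $m$ dimensions, and this matching of powers is what makes the Rayleigh quotient argument in the proof of \autoref{newthmhighdim} succeed.
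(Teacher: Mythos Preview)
Your argument is correct and follows the same route as the paper: the paper's ``proof'' is a single sentence noting that $T_x$ is conformal on $\R^{m+1}$ (its derivative being a positive scalar times an orthogonal matrix), from which the invariance of the $m$-energy is declared to follow easily. You simply flesh out that sentence, recording the scaling identities $|\nabla_{g_0}(u\circ T_x)|_{g_0}=\mu_x\,|\nabla_{g_0}u|_{g_0}\circ T_x$ and $T_x^\ast dv_{g_0}=\mu_x^{\,m}\,dv_{g_0}$ explicitly and noting that the powers of $\mu_x$ cancel exactly when the exponent equals the dimension; nothing in your write-up departs from the paper's intended reasoning.
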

The lemma follows easily since the M\"{o}bius transformation $T_{x}$ is a conformal map on $\R^{m+1}$ and so its derivative matrix is the product of a positive factor and an orthogonal matrix, at each point.


Also, conformally equivalent metrics give the same Dirichlet energy. Let $[g]$ denote the conformal class of metrics that are equivalent to the metric $g$.
\begin{lemma}(Invariance of Dirichlet integral between conformally equivalent metrics)\label{confinv2}
 If $h \in [g]$ is a Riemannian metric on a manifold $M$ with dimension $m$, then
\begin{align*}
\int_{M}^{} |\nabla_{\!g\,}f|_{g}^{m} \, dv_{g}=\int_{M}^{} |\nabla_{\! h\,}f|_{h}^{m} \, dv_{h}
, \qquad f \in C^{1}(M).
\end{align*}
\end{lemma}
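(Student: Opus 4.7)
The plan is to reduce the identity to a pointwise equality of integrands. Since $h \in [g]$ I can write $h = e^{2\varphi} g$ for some $\varphi \in C^\infty(M)$. The key observation is that the exponent $m$ in both the gradient norm and the integration is chosen precisely so that the conformal factors produced by these two pieces cancel exactly.

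First I would check how the gradient transforms. Raising an index with the inverse metric $h^{-1} = e^{-2\varphi} g^{-1}$ gives $\nabla_h f = e^{-2\varphi}\, \nabla_g f$, and then taking the $h$-norm of this vector field yields
\begin{equation*}
|\nabla_h f|_h^{\,2} = h(\nabla_h f,\nabla_h f) = e^{2\varphi}\cdot e^{-4\varphi}\,|\nabla_g f|_g^{\,2} = e^{-2\varphi}\,|\nabla_g f|_g^{\,2}.
\end{equation*}
Raising to the power $m/2$ therefore gives $|\nabla_h f|_h^{\,m} = e^{-m\varphi}\,|\nabla_g f|_g^{\,m}$.

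Next I would compute how the volume form scales. In any coordinate chart $\det h = e^{2m\varphi}\det g$, so $dv_h = e^{m\varphi}\,dv_g$. Multiplying the two identities, the exponential factors cancel perfectly,
\begin{equation*}
|\nabla_h f|_h^{\,m}\, dv_h = e^{-m\varphi}\,|\nabla_g f|_g^{\,m}\cdot e^{m\varphi}\, dv_g = |\nabla_g f|_g^{\,m}\, dv_g,
\end{equation*}
and integrating over $M$ yields the lemma.

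There is no real obstacle here; the content of the statement lies entirely in the matching of exponents. For any other exponent $p \neq m$ the integrand would retain a residual factor $e^{(m-p)\varphi}$ and conformal invariance would fail. This is exactly why $p = m$ is the \emph{critical} conformally invariant exponent, and it is what allows the Rayleigh quotient manipulation in the main argument to pass freely between the given metric $g$ and the round metric $g_{0}$.
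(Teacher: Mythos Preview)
Your argument is correct and is exactly the computation the paper has in mind: it omits the proof, remarking only that ``the lemma follows easily by writing $h=cg$ for each point.'' Your pointwise cancellation of the factor $e^{m\varphi}$ between $|\nabla_h f|_h^{\,m}$ and $dv_h$ is precisely that calculation spelled out.
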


We omit the proof since the lemma follows easily by writing $h=cg$ for each point.

\section*{\textbf{Acknowledgements}}
I am grateful for support from the University of Illinois Campus Research Board award RB19045 (to Richard Laugesen). Mikhail Karpukhin provided helpful feedback on the first draft of the paper, in particular suggesting to take the sum inside the integral before applying  H\"{o}lder's inequality, which turns out to be what El Soufi and Ilias did in their treatment of the first eigenvalue.

\bibliographystyle{plain}


\end{document}